\documentclass[12pt]{article}

\usepackage{amssymb,bbm}    
\usepackage{amsmath}    
\usepackage{amsthm} 
\usepackage{mathtools}
\usepackage{mathrsfs}
\usepackage[x11names]{xcolor}
\usepackage{easyReview}
\allowdisplaybreaks
\usepackage{hyperref}

\newcommand{\sumi}{\sum_{i=1}^n}

\begin{document}

\theoremstyle{plain}
\newtheorem{theorem}{Theorem}
\newtheorem{proposition}{Proposition}
\newtheorem{cor}{Corollary}
\newtheorem{lemma}{Lemma}
\newtheorem{defin}{Definition}
\newtheorem{problem}{Problem}
\newtheorem{remark}{Remark}
\newtheorem{example}{Example}
\newcounter{aid}

\theoremstyle{remark}

\def\N{I\!\!N}
\def\Z{Z}
\def\Zc{\mathcal{Z}}
\def\Zs{\mathscr{Z}}
\def\D{{\cal D}}
\def\Q{Q}
\def\R{{\mathbb R}}
\def\Rd{{\mathbb R}^d}
\def\bR{I\!\!R}
\def\C{C}
\def\P{{\cal P}}
\def\F{{\cal F}}
\def\X{{\cal X}}
\def\Y{{\cal Y}}
\def\bX{{\bf X}}
\def\PROB {{\mathbb P}}
\def\EXP {{\mathbb E}}
\def\IND{{\mathbb I}}
\def\Var{{\mathbb Var}}
\def \a {^{\ast}}
\def \p {^{\prime}}
\def \pp {^{\prime\prime}}
\def \stt {\stackrel{\triangle}{=}}
\def \stL {\stackrel{L}{\longrightarrow}}
\def \stUL {\stackrel{UL}{\longrightarrow}}
\def \stD {\stackrel{{\cal D}}{\longrightarrow}}
\def \eD {\stackrel{{\cal D}}{=}}
\def \lD {\stackrel{{\cal D}}{\le}}
\def \ve {\varepsilon}
\def \nl {\nolimits}
\def\argmin{\mathop{\rm arg\, min}}
\def\argmax{\mathop{\rm arg\, max}}
\def\dist{\nu}
\def\marg{\mu}
\newcommand{\nrm}[1]{\left\Vert #1 \right\Vert}
\newcommand{\eps}{\varepsilon}
\newcommand{\tS}{\tilde S}
\newcommand{\Ber}{\text{Bernoulli}}
\newcommand{\tp}{\tilde p}
\newcommand{\dP}{R}
\renewcommand{\k}{k_{n}}
\newcommand{\gproto}{\tilde g}
\newcommand{\gkproto}{\hat{g}}
\newcommand{\proto}{\text{Proto-NN}}
\newcommand{\optinet}{\text{OptiNet}}
\newcommand{\hybrid}{\text{Proto-$k$-NN}}
\newcommand{\ft}{f^t}
\newcommand{\G}{\mathcal G}
\newcommand{\nutilde}{{\widetilde \nu}}
\newcommand{\mhat}{{\widehat m}}
\newcommand{\mphat}{{\mhat\p}}
\newcommand{\defeq}{=}
\renewcommand{\H}{\mathcal H}

\newcommand{\LG}[1]{\textcolor{red}{\textbf{LG:} #1}}
\newcommand{\MK}[1]{\textcolor{blue}{\textbf{MK:} #1}}


\title{On public and private binary classification with metric space valued predictors}
\author{
L\'aszl\'o Gy\"orfi\thanks{Department of Computer Science and Information Theory, Budapest University of Technology and Economics, Magyar Tud\'{o}sok krt. 2., Budapest, H-1117, Hungary.
\texttt{gyorfi@cs.bme.hu}}
\and Martin Kroll\thanks{Fakultät für Mathematik, Physik und Informatik, Universität Bayreuth, Universitätsstraße 30, 95447 Bayreuth, \texttt{martin.kroll@uni-bayreuth.de}.}
\and Harro Walk\thanks{Institute for Stochastics and Applications, University of Stuttgart, Pfaffenwaldring 57, D-70569 Stuttgart, Germany, \texttt{harro.walk@mathematik.uni-stuttgart.de}.}
}

\maketitle

\begin{abstract}
We consider the problem of binary classification in a framework where the predictor $X$ takes values in an arbitrary separable metric space $\X$ and the label $Y$ values in $\{ \pm 1 \}$.
In the first part of this work, we assume that one has direct access to an i.i.d.~sample $(X_1,Y_1),\ldots,(X_n,Y_n)$ from the unknown distribution of the pair $(X,Y)$.
We derive a convergence rate for the {\sc Proto-NN} classifier which was recently introduced in \cite{GyWe21} as a classifier in the presence of metric space-valued predictors.
In the second part of the paper, we reconsider the same problem under an additional privacy constraint.
More precisely, we work in the framework of local differential privacy where one assumes that the data $(X_1,Y_1),\ldots,(X_n,Y_n)$ cannot be directly observed but only a privatised surrogate obtained through a suitable mechanism satisfying the privacy constraint is available.
The statistician should select an optimal privacy mechanism from the class of all mechanism that guarantee local differential privacy. 
Our method of choice is to add Laplace distributed noise to both a set of indicator functions corresponding to a random partition of the metric space $\X$ and the labels $Y_i$.
The data obtained by this perturbation approach satisfy the constraint of local differential privacy and we show that a privatised version of the {\sc Proto-NN} classifier using the privatised data only is universally consistent.
Finally, a rate of convergence for the privatised {\sc Proto-NN} classifier is derived.
\end{abstract}

\noindent

{\em AMS Classification}: 62G08, 62G20.

\noindent

{\em Key words and phrases}: classification, local differential privacy, {\sc Proto-NN} classifier, universal consistency, rate of convergence


\section{Introduction}

Let $(\X,\rho)$ be a separable metric space equipped with its Borel $\sigma$-field. A standard reference for such probability measures on metric spaces is the monograph \cite{Par67} by Parthasarathy.
Consider a random pair $(X,Y)$ where we assume that the predictor $X$ takes values in $\X$ and the binary label
$Y$ values in $\{\pm 1\}$.
Let $\mu$ be the distribution of $X$, that is, $\mu(A)= \PROB (X\in A)$ for all Borel sets $A$.

The standard setup of non-parametric classification is to decide on $Y$ given $X$, that is, one aims to find a decision function $g$ defined on the range of $X$ such
that $g(X)=Y$ with large probability.  
Let
\[
L(g)=\PROB\{g(X)\ne Y\}.
\]
denote the \emph{error probability} of the decision $g(X)$.
It is well-known that the error probability is minimized by the \emph{Bayes decision rule} $g^*$,
\[
g^*(x) =\mbox{sign }m(x),
\]
where $\mbox{sign}(x) = \IND_{\{x\geq 0\}}- \IND_{ \{ x <0\}}$, and
\begin{equation*}
	\label{def:model}
	m(x) = \EXP [Y | X=x]
\end{equation*}
is the \emph{regression function} which is well-defined for $\mu$-almost all $x \in \X$.
In the following we denote with
\[
L^*=\PROB\{g^*(X)\ne Y\}=\min_g L(g)
\]
the smallest possible error probability which is attained by the Bayes decision rule.
The quantity $L^\ast$ provides a theoretical benchmark which is in general not achievable since the optimal Bayes decision rule depends on the in practise unknown distribution of the pair $(X,Y)$.

Instead, in classification it is commonly assumed that raw data $\D_n$
\begin{align}
\label{eq:raw_data}
\D_n \defeq \{(X_1,Y_1),\ldots,(X_n,Y_n)\}
\end{align}
consisting of i.i.d.\,copies of the random pair $(X,Y)$ are available.

The main aim of this paper is to propose classifiers $g$ based on either the raw data \eqref{eq:raw_data} or on suitably privatised surrogates.
The case when the raw data \eqref{eq:raw_data} are available has been intensely studied in the literature, especially in the case when the predictor $X$ belongs to some Euclidean $\R^d$.
For this special case, let us mention the work \cite{KoKr07} of Kohler and Krzy\.zak where plug-in decision rules based on local averaging, partitioning, and nearest neighbour estimates of the regression function are studied in detail.
The present work deals with plug-in classifiers for partitioning estimates of the regression function only since this approach can rather easily be adapted to suggest privatised data and classifiers based on these data.
In the Euclidean case $\X=\R^d$, Berrett and Butucea \cite{BeBu19} studied a classifier, where the corresponding partition is a cubic one.
However, for a general metric space $\X$ it is not obvious how to define a partition.
In order to address this problem, we revisit the {\sc Proto-NN} classifier introduced recently in Gy\"orfi and Weiss \cite{GyWe21}.
That work suggests to use a special data dependent partition which is defined as the Voronoi partition obtained from a set of so-called prototypes.
In the case of $\X=\R^d$ and for non-privatised data, there exist many works on the consistency of data dependent partitioning, Biau, Devroye and Lugosi \cite{BiDeLu12},
Breiman et al. \cite{BrFrStOl84},
Devroye et al. \cite{DeGyLu96},
Lugosi and Nobel \cite{LuNo96}, just to mention a few.
However, the results in the mentioned papers do not yield the optimal rate of convergence.
In order to fill this existing gap in the literature, we consider the case with available raw data \eqref{eq:raw_data} first and derive a convergence rate for the {\sc Proto-NN} classifier under some smoothness and margin condition on the regression function $m$ (see Theorem \ref{patt_rate} in Section~\ref{s:non-private}).
Subsequently, in Section~\ref{s:private}, we turn to the case of privatised data. Under the local differential privacy framework, we first state universal consistency of the {\sc Proto-NN} classifer (Theorem \ref{thm:multi_label_class,priv}) and then derive its rate of convergence (Theorem \ref{p_patt_rate}).
All proofs are deferred to Section~\ref{s:proofs}.


\section{Classification from non-private data}\label{s:non-private}

For non-private data, Gy\"orfi and Weiss \cite{GyWe21} introduced the so called {\sc Proto-NN} classifier, which is a partitioning classifier with data-driven partition.
For an integer $k\ge 1$, we assume that in addition to the labelled sample $\D_n$, we also have an independent unlabelled sample, called prototypes, $\bX'_k =\{X'_1,\dots ,X'_k\}$, where the $X'_i$'s are independent copies of $X$.
 Sometimes we will write $k_n$ instead of $k$ in order to emphasize that the number $k$ of prototypes is usually chosen in dependence on the sample size $n$.
Let the data-driven partition $\P_{k}$ of $\X$ be such that $\P_{k}$ is a Voronoi partition with the nucleus set $\bX'_k$, i.e., 
\begin{align}
\label{Pk}
\P_{k}=\{A_{k,1},A_{k,2},\dots ,A_{k,k}\}
\end{align}
such that $A_{k,j}$ is the Voronoi cell around the nucleus $X'_j$,
\begin{align*}
A_{k,j} = \left\{x\in\X : j = \argmin_{1\leq i \leq k} \rho(x,X'_i)\right\},
\end{align*}
where tie breaking is done by indices, i.e., if $X'_i$ and $X'_j$ with $i \neq j$ are equidistant from $x$, then $X'_i$ is declared ``closer'' if $i < j$.
In this paper we assume that ties appear with zero probability.

As in \cite{GyWe21}, an additional assumption on the tie breaking is introduced.
For any fixed $x \in \X$, let $H_x\colon [0,\infty) \to [0,1]$ be defined by
\begin{equation}\label{defhx}
H_x(r) =\PROB \left(\rho (x,X)\le r\right)=\mu(S_{x,r}), \quad r \ge 0,
\end{equation}
be the cumulative distribution function of $\rho (x,X)$ where $S_{x,r}$ denotes the closed ball of radius $r$ centered at $x$. In the sequel, we assume that $H_x$ is continuous for all $x \in \X$.
This assumption holds, for example, in the case that $\X = \R^d$ and  $X$ has a density.
If $H_x$ is continuous, then in the definition of nearest neighbors, ties happen with probability zero.
In general, one can achieve that $H_x$ is continuous by adding a randomized component to $X$, see Gy\"orfi and Weiss \cite{GyWe21}.

For any fixed value of $k$, define the function $L_k\colon \X \to \P_{k}$ via
\begin{align*}
L_k(x)
&=A_{k,j},
\end{align*}
when $x\in A_{k,j}$.
For $\X=\R^d$, assuming that $\mu$ has a density, Devroye et al. \cite{DeGyLuWa17} and Gibbs and Chen \cite{GiCh20} proved that
$k\mu(A_{k,1})$ and  $k\mu(L_k(x))$ have limit distributions that depend neither on $\mu$ nor on $x$, but only on the dimension $d$.

Set
\begin{align*}
\nu_{n}(A_{k,j}) =\frac 1n \sum_{i=1}^n Y_i  \IND_{\{X_i \in A_{k,j}\}}
\end{align*}
Then, the {\sc Proto-NN} classification rule is defined by
\begin{align*}
g_{n}(x)=\mbox{sign } \nu_{n}(A_{k,j}),\quad \mbox{ if } x\in A_{k,j}.
\end{align*}
Under the conditions 
\begin{align}
\label{kn}
k_n \to \infty \quad \mbox{and}\quad n/k_n\to \infty,
\end{align}
the strong universal consistency of the classifier $g_n$ has been proved in Corollary 4 of Gy\"orfi and Weiss \cite{GyWe21}.

In order to get a nontrivial rate of convergence for the classification error probability, we impose a smoothness condition and a margin condition.

The regression function $m$ satisfies the {\textit{generalized Lipschitz condition}} if there is a monotonically increasing function $h: [0,1]\to \R^+$ with $h(s)\downarrow 0$ as  $s\downarrow 0$ such that
\begin{equation}
\label{gen-Lip}
|m(x)-m(z)|\le h(\marg(S_{x,\rho(x,z)}))
\end{equation}
for any $x,z \in \X$, see Gy\"orfi and Weiss \cite{GyWe21}.

If $\X=\R^d$, then the generalized Lipschitz condition appears in  Chaudhuri and Dasgupta \cite{ChDa14}
and in  D\"oring, Gy\"orfi and Walk \cite{DoGyWa18} with
\begin{align}
\label{hs}
h(s)=c_0\cdot s^{1/d}.
\end{align}
If  $\P_h=\{A_{h,1},A_{h,2},\ldots\}$ is a cubic partition of $\Rd$ with cubic cells $A_{h,j}$ of volume $h^d$, $X$ is bounded,  and the margin and the Lipschitz conditions on $m$ are satisfied, 
Kohler and Krzy\.zak \cite{KoKr07} showed for partitioning binary classification with suitably chosen $h_n$ that
\begin{align}
	\label{99'}
	\EXP\{L(g_n)\}-L^*
	&=
	O\left(n^{-\frac{1+\gamma}{3+\gamma+d}}\right).
\end{align}
We say that the strong density assumption (SDA) holds, when for all $\mu(A_{h,j})>0$, we have
\begin{equation}\label{eq:SDA}
	\mu(A_{h,j}) \geq c h^d, \qquad j = 1,2,\ldots,
\end{equation}
for some constant $c > 0$.
If $X$ is bounded, the margin and the Lipschitz conditions on $m$ are satisfied and the SDA is met, then the Kohler and Krzy\.zak showed that the order of the rate of convergence is
\begin{align}
	\label{9'}
	n^{-\frac{1+\gamma}{2+d}}.
\end{align}
Under the SDA, Audibert and Tsybakov \cite{AuTs05} proved the minimax optimality of this rate.

Concerning the rate of convergence of binary classification,  Audibert and Tsybakov \cite{AuTs05}, Mammen and Tsybakov \cite{MaTs99} and Tsybakov \cite{Tsy04}
discovered and investigated the phenomenon that there is a dependence on the behaviour of $m$ in the neighbourhood of the decision boundary 
\begin{align*}
	B^*=\{x: m(x)=0\}.
\end{align*}
The {\em margin condition} states that for all $0<t\le 1$, we have
\begin{align}
	\label{wtsyb}
	G^*(t)
	&:=
	\int \IND_{\{0<|m(x)|\le t\}}\mu(dx)
	\le
	c^*\cdot t^{\gamma},
\end{align}
for some constants $c^* \geq 0$ and $\gamma \geq 0$.

\begin{theorem}
\label{patt_rate}
Assume that the distribution function $H_x$ is continuous for all $x \in \X$, 
the regression function $m$ satisfies the generalized Lipschitz condition such that the function  $h^{1+\gamma}$ is concave and the margin condition \eqref{wtsyb} holds for some $\gamma > 0$. 
Then 
{\begin{align*}
\EXP\{L(g_n)\}-L^*
&=
O\left(h(1/k)^{1+\gamma} \right)
+
O\left( \sqrt{\frac{k}{n }}\right).
\end{align*}}
\end{theorem}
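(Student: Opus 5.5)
We split the excess risk with the standard plug-in inequality. Introduce the averaged regression function on cells,
\[
\bar m_k(x)=\frac{\int_{A_{k,j}} m\,d\mu}{\mu(A_{k,j})}, \quad x\in A_{k,j},
\]
and its empirical counterpart $m_n(x)=\nu_n(A_{k,j})/\mu(A_{k,j})$. The latter has the same sign as $\nu_n(A_{k,j})$, so $g_n=\mbox{sign } m_n$. By the well-known identity
\[
L(g_n)-L^*=\int \IND_{\{g_n(x)\ne g^*(x)\}}|m(x)|\mu(dx),
\]
and since $g_n(x)\neq g^*(x)$ forces $|m(x)|\le |m(x)-m_n(x)|$, we get
\[
L(g_n)-L^*\le \int \IND_{\{|m(x)|\le |m(x)-\bar m_k(x)|+|\bar m_k(x)-m_n(x)|\}}|m(x)|\mu(dx).
\]
On the event where the bias term dominates, $|m(x)|\le 2|m(x)-\bar m_k(x)|$. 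On the complementary event, $|m(x)|\le 2|\bar m_k(x)-m_n(x)|$. Conditionally on the prototypes we treat the two pieces separately.

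\emph{Variance part.} For the estimation piece we use the crude bound $|m|\,\IND_{\{|m|\le 2|\bar m_k-m_n|\}}\le 2|\bar m_k-m_n|$. Then
\[
\EXP\int|\bar m_k-m_n|\,d\mu=\sum_j \EXP|\nu_n(A_{k,j})-\EXP\nu_n(A_{k,j})|\le\sum_j\sqrt{\mu(A_{k,j})/n}\le\sqrt{k/n},
\]
by Cauchy--Schwarz. This gives the $O(\sqrt{k/n})$ term and needs no margin condition.

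\emph{Bias part.} By the margin condition \eqref{wtsyb},
\[
\int \IND_{\{|m|\le 2|m-\bar m_k|\}}|m|\,d\mu\le \int \IND_{\{0<|m|\le t\}}|m|\,d\mu+\int_{\{|m|>t\}}\ldots
\]
A cleaner route is the pointwise bound
\[
|m|\IND_{\{0<|m|\le 2\delta\}}\le 2\delta\,\IND_{\{0<|m|\le 2\delta\}}, \qquad \delta(x)=|m(x)-\bar m_k(x)|,
\]
together with the layer-cake or Hölder argument that gives $\int |m|\IND_{\{0<|m|\le 2\delta\}}d\mu \le C\int \delta^{1+\gamma}d\mu$. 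This is the standard margin trick; it can be done by splitting $\delta$ into dyadic levels, or simply via $|m|\le 2\delta$ giving $|m|\le 2^{1+\gamma}\delta^{1+\gamma}|m|^{-\gamma}$, which is handled through the distribution of $|m|$. So it suffices to bound $\EXP\int |m-\bar m_k|^{1+\gamma}d\mu$.

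By Jensen, for $x\in A_{k,j}$ we have $|m(x)-\bar m_k(x)|\le \frac1{\mu(A_{k,j})}\int_{A_{k,j}}|m(x)-m(z)|\mu(dz)$. The generalized Lipschitz condition \eqref{gen-Lip} then bounds this by an average of $h(\mu(S_{x,\rho(x,z)}))$. Using concavity of $h^{1+\gamma}$ and Jensen once more, we reduce to bounding $h^{1+\gamma}$ evaluated at an expected ball mass. Here is the main obstacle: to show that for $X\sim\mu$ and $z$ in the same Voronoi cell, $\EXP\mu(S_{X,\rho(X,z)})=O(1/k)$. The plan is to use $\rho(x,z)\le\rho(x,X'_{(x)})+\rho(X'_{(x)},z)$ and compare with nearest-neighbor distances. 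The key fact, from continuity of $H_x$, is that $H_x(\rho(x,X'_{NN}))$ is $\mathrm{Beta}(1,k)$ distributed, with mean $1/(k+1)$. Treating the nucleus $X'_j$ as a point whose nearest neighbour among the other prototypes bounds the cell, a symmetrization over $(X,z,X'_1,\dots,X'_k)$ as exchangeable i.i.d. points should yield the same $O(1/k)$ bound, possibly with $S_{x,2r}$ controlled as in Gy\"orfi and Weiss \cite{GyWe21}. If this doubling step fails, we would instead bound via $\mu(S_{x,\rho(x,z)})\le$ the rank of $z$ among $\rho(x,\cdot)$-ordered sample points. Finally, concavity gives $\EXP h^{1+\gamma}(\cdot)\le h^{1+\gamma}(C/k)=O(h(1/k)^{1+\gamma})$, completing the bias term.
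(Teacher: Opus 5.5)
Your variance part is fine and gives the same $\sqrt{k/n}$ term as the paper. The bias part, however, has two genuine gaps.

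First, your ``standard margin trick'' $\int |m|\,\IND_{\{0<|m|\le 2\delta\}}\,d\mu\le C\int \delta^{1+\gamma}\,d\mu$ is false when the threshold $\delta(x)$ varies with $x$, and you give no property of $|m-\bar m_k|$ that would rescue it. For a counterexample, let $G^*(t)=t^{\gamma}$, and set $\delta=|m|/2$ on $\{0<|m|\le \eta\}$ and $\delta=0$ elsewhere. Then the left side equals $\frac{\gamma}{1+\gamma}\eta^{1+\gamma}$, while $\int\delta^{1+\gamma}d\mu$ is of order $\eta^{1+2\gamma}$. Your variant through $|m|^{-\gamma}$ also fails, since $\int_{\{0<|m|\le t\}}|m|^{-\gamma}d\mu=\infty$ for this $G^*$. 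The margin condition turns into $v^{1+\gamma}$ only for a threshold $v$ that is the same for all $x$.

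Second, you flag as the main obstacle the estimate that the $\mu$-mass of $S_{x,\rho(x,z)}$, averaged over $z$ in the cell of $x$, is $O(1/k)$ in expectation. This is not merely unproved; it is false in general separable metric spaces. Without a doubling property, $\rho(x,z)\le\rho(x,X'_j)+\rho(X'_j,z)$ gives no control of $\mu(S_{x,\rho(x,z)})$. Take countably many unit segments glued at a common endpoint $c$, with the path metric, and let $\mu$ be uniform on segment $b$ with total mass $p_b\propto b^{-2}$. Then:
\begin{itemize}
\item $H_x$ is continuous;
\item every segment carrying no prototype lies in the cell of the prototype closest to $c$;
\item these segments have expected total mass of order $k^{-1/2}$, comparable to the mass of that cell;
\item for $x,z$ on two different such segments, $S_{x,\rho(x,z)}\supseteq S_{c,\rho(c,z)}$, a ball of mass $\rho(c,z)$.
\end{itemize}
So the quantity you want to bound by $O(1/k)$ is in fact at least of order $k^{-1/2}$. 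No symmetrization or rank argument can fix this.

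The missing idea is to stop using the cell average as the comparison point. The paper compares $g_n$ with $g_{\mathrm{NN}}(x)=\mbox{sign}\, m(X'_{k,1}(x))$, which depends on $x$ only through $Q_k(x)$ and $\bX'_k$. Since $\bar g_k=\mbox{sign}\,\bar m_k$ is the Bayes rule for the quantized data, $L(\bar g_k)\le L(g_{\mathrm{NN}})$. Hence $\EXP\{L(g_n)\}-\EXP\{L(g_{\mathrm{NN}})\}\le \EXP\{L(g_n)\}-\EXP\{L(\bar g_k)\}=\sum_j \EXP\{\IND_{\{\mbox{sign}\,\nu_n(A_{k,j})\ne \mbox{sign}\,\nu(A_{k,j})\}}\,|\nu(A_{k,j})|\}$, which Chebyshev bounds by $\sqrt{k/n}$. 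The size of $m-\bar m_k$ never enters.

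In $\EXP\{L(g_{\mathrm{NN}})\}-L^*$ only the distance from $x$ to its nearest prototype matters. By continuity of $H_x$, the mass $\mu(S_{x,\rho(x,X'_{k,1}(x))})$ has the law of $\min_{i\le k}U_i$, with $U_1,\dots,U_k$ i.i.d.\ uniform on $[0,1]$, for every $x$. This $x$-independence is exactly what your argument lacks. After Fubini, the margin condition is applied with the constant threshold $v=h(\min_{i} U_i)$, via $v\IND_{\{t\le v\}}\le (2v-t)_+$. This gives $C\,\EXP\{h(\min_i U_i)^{1+\gamma}\}\le C\,h(1/k)^{1+\gamma}$ by Jensen and the concavity of $h^{1+\gamma}$.
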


\noindent
{\sc Remark 1.}
We conjecture that the bound in Theorem  \ref{patt_rate} can be improved such that
\begin{align}\label{eq:conjecture:after:thm1}
\EXP\{L(g_n)\}-L^*
&=
O\left(h(2/k)^{1+\gamma} \right)
+
O\left(\left( \frac{k}{n }\right)^{(\gamma+1)/2}\right).
\end{align}
Under the conditions of Theorem  \ref{patt_rate}, Gy\"orfi and Weiss \cite{GyWe21}
bounded the rate of convergence of $K$-NN classification rule $\widehat g_n$ .
Their Theorem~6 states that
\begin{align}\label{KNN}
\EXP\{L(\widehat g_n)\}-L^*
&=
O\left(h(2K/n)^{1+\gamma} \right)
+
O\left(\left( \frac{1}{K }\right)^{(\gamma+1)/2}\right).
\end{align}
Thus, for the correspondence
\begin{align}\label{kK}
K
&=
\lfloor n/k \rfloor,
\end{align}
the conjecture \eqref{eq:conjecture:after:thm1}  and the rate \eqref{KNN}  are equivalent.\\

\noindent
{\sc Remark 2.}
If, in the special case of $\X=\R^d$, (\ref{hs}) holds, then
for the choice 
\begin{align*}
k\approx n^{\frac{d}{d+2(1+\gamma)} }
\end{align*}
Theorem  \ref{patt_rate} implies
\begin{align}
	\label{0'}
	\EXP\{L(g_n)\}-L^*
	&=
	O\left(n^{-\frac{1+\gamma}{d+2(1+\gamma)}}\right),
\end{align}
which is between (\ref{99'}) and (\ref{9'}), for $\gamma\le 1$. 

\section{Classification from privatised data}\label{s:private}

In the framework of local differential privacy, the raw data $\mathcal D_n$ in \eqref{eq:raw_data} are not directly accessible but only a suitably anonymized surrogate.
The only restriction on the class of potential privacy mechanisms is the condition of local differential privacy which has to be satisfied by the anonymized data.
The choice of the privacy mechanism in the present work is motivated by the recent paper Berrett and Butucea \cite{BeBu19} where a first step in this direction was done for the Euclidean case.
In this special case with $\X=\R^d$, let $\P_h=\{A_{h,1},A_{h,2},\ldots\}$ be a cubic partition of $\Rd$ with cubic cells $A_{h,j}$ of volume $h^d$.
In the privacy setup in \cite{BeBu19}, the dataholder of the $i$-th datum $X_i$ generates and transmits to the statistician the data
\begin{align*}
	Z'_{i,j} \defeq  Y_i\IND_{\{X_i \in A_{h,j}\}} + \sigma_Z \epsilon_{i,j}, \quad j = 1,2,\ldots
\end{align*}
with noise level $\sigma_Z>0$, and $\epsilon_{i,j}$ ($i=1,\ldots,n$, $j=1,2,\ldots$) are independent centred Laplace random variables with unit variance.
This means that the $i$-th individual generates noisy data for any cell $A_{h,j}$.
This privatization mechanism was studied already
for classification in Berrett and Butucea \cite{BeBu19}, 
in  Berrett,  Gy\"orfi and  Walk \cite{BeGyWa21},
and in Cs\'aji et al. \cite{CsGyTaWa24}.
For regression and density estimation this privacy mechanism was considered in Gy\"orfi and Kroll \cite{GyKr25} and \cite{GyKr23}, respectively.

Let us briefly recall the definition of LDP.
A non-interactive privacy mechanism can be described by the conditional distributions $Q_i$ of  the privatized data $Z_i$ given the raw data $(X_i,Y_i)$, $i = 1,\dots ,n$, where $Z_i$ takes its values in a measurable space $(\Zc,\Zs)$. Specifically, given a realization of the raw data $(X_i, Y_i) = (x_i, y_i)$, one generates $Z_i$
according to the probability measure defined by $Q_i(A \,|\, (X_i, Y_i) = (x_i, y_i))$, for any $A \in \Zs$. Such a non-interactive mechanism is {\em local} since any data
holder can independently generate privatized data (that is, without a trusted third party). For a privacy parameter $\alpha \in [0,\infty]$, a non-interactive privacy mechanism is said to be an $\alpha$-{\em locally differentially private} mechanism if the condition
\vspace{1mm}
\begin{equation*}
    \frac{Q_i(A \,|\, (X_i, Y_i) = (x, y)) }{Q_i(A \,|\, (X_i, Y_i) = (x', y')) } \leq \exp(\alpha)
    \vspace{1mm}
\end{equation*}
holds for all $A \in \Zs$ and all realizations $(x, y)$, $(x', y')$ of the raw data. 
The noise level $\sigma_Z$ has to be chosen as $2 \sqrt{2}/ \alpha$ to make the overall mechanism satisfy $\alpha$-LDP, see \cite{BeBu19}.

In this setup, the privacy mechanism has been derived from a rather natural partition of Euclidean space into cubes of equal volume.
For general metric spaces, a partition cannot be defined this way since the notion of a cubes is not well-defined. 
Instead, we rely in the following on the Voronoi partition defined by the prototypes from Section~\ref{s:non-private}.
Assume that the prototypes $\bX'_k$ are published in advance and, therefore that the corresponding Voronoi partition (\ref{Pk}) is known to all the dataholders. 
Then, privatised data are generated via the non-interactive mechanism given by
\begin{equation}
	\label{Eq:Mech1M}
	Z_{i,j} \defeq  Y_i \IND_{\{X_i \in A_{k,j}\}} + \sigma_Z \epsilon_{i,j}, \quad j = 1,2,\ldots
\end{equation}
In terms of the so-defined privatised data, we put
\begin{align*}
\widetilde \nu_{n}(A_{k,j})=\frac 1n \sum_{i=1}^n Z_{i,j}, \quad j = 1,2,\ldots
\end{align*}
and
\begin{align*}
\widetilde g_{n}(x)=\mbox{sign } \widetilde \nu_{n}(A_{k,j}),\quad \mbox{ if } x\in A_{k,j}.
\end{align*}

The following theorem states the universal consistency of the  private classifier $\widetilde g_n$, extending the corresponding result from Berrett and Butucea \cite{BeBu19} for the Euclidean case to general metric spaces.

\begin{theorem}
\label{thm:multi_label_class,priv}
If 
\begin{align}
\label{nkn}
k_n \to \infty \quad \mbox{and}\quad k_n/\sqrt{n}\to 0,
\end{align}
holds as $n \to \infty$, then the privatized classification rule $\widetilde g_n$ is universally strongly consistent, i.e., for any distribution of $(X,Y)$
\begin{align*}
\lim_{n\to\infty} L(\widetilde g_n)=L^*
\qquad \text{a.s.}
\end{align*}
\end{theorem}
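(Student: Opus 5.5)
Conditionally on the prototypes $\bX'_k$, I will split the excess error of $\widetilde g_n$ into the excess error of the plug-in rule built from the true cell averages plus a stochastic term. Put $\nu(A)=\int_A m\,d\mu=\EXP[Y\IND_{\{X\in A\}}]$ and let $g^{(k)}(x)=\mbox{sign }\nu(A_{k,j})$ for $x\in A_{k,j}$. The standard plug-in inequality, applied cellwise, gives
\begin{align*}
L(\widetilde g_n)-L^*\le \bigl(L(g^{(k)})-L^*\bigr)+2\sum_{j=1}^k|\widetilde\nu_n(A_{k,j})-\nu(A_{k,j})|.
\end{align*}
To see this, note that on a cell where $\mbox{sign }\widetilde\nu_n\ne\mbox{sign }\nu$ the additional error is $|\nu(A_{k,j})|\le|\widetilde\nu_n(A_{k,j})-\nu(A_{k,j})|$. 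I will then show that each of the two terms tends to zero a.s.

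The bias term $L(g^{(k)})-L^*$ does not involve the privatisation. It tends to zero a.s. as $k\to\infty$ by the argument behind Corollary~4 of Gy\"orfi and Weiss \cite{GyWe21}. That argument shows that the partitioning estimate with true cell averages, $\nu(L_k(x))/\mu(L_k(x))$, converges to $m$ in $L_1(\mu)$ a.s. along the prototype sequence. It uses continuity of $H_x$ and the fact that the Voronoi cells of $k$ i.i.d. prototypes shrink in $\mu$-measure. I take that part as given, and the bound $L(g^{(k)})-L^*\le 2\int|\nu(L_k(x))/\mu(L_k(x))-m(x)|\mu(dx)$ then finishes this term.

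For the variance term, decompose $\widetilde\nu_n(A_{k,j})-\nu(A_{k,j})=(\nu_n(A_{k,j})-\nu(A_{k,j}))+\sigma_Z\frac1n\sum_i\epsilon_{i,j}$.
\begin{itemize}
\item The non-private part satisfies $\sum_j|\nu_n(A_{k,j})-\nu(A_{k,j})|\to0$ a.s. whenever $k/n\to0$. This follows from a bounded-difference (McDiarmid) inequality with differences $2/n$, together with expectation at most $\sqrt{k/n}$ by Cauchy--Schwarz. This holds conditionally on the prototypes, which are independent of $\D_n$.
\item The noise part is $\sigma_Z\sum_{j=1}^k|\bar\epsilon_j|$, where $\bar\epsilon_j=\frac1n\sum_i\epsilon_{i,j}$ are independent across $j$. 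Its mean is at most $\sigma_Z k/\sqrt n\to0$ by \eqref{nkn}, so convergence in probability is immediate.
\end{itemize}

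The main obstacle is upgrading the noise term to almost sure convergence. I will apply a Bernstein-type inequality to the sum $\sum_{i,j}\pm\epsilon_{i,j}/n$ of $nk$ independent Laplace variables, handling the absolute values by writing $\sum_j|\bar\epsilon_j|=\max_{s\in\{\pm1\}^k}\sum_j s_j\bar\epsilon_j$ and taking a union bound over the $2^k$ sign vectors. For fixed $s$, the sum $\sum_{i,j}s_j\epsilon_{i,j}/n$ has variance $k/n$ and sub-exponential scale $1/n$. The bound is therefore of order $2^k\exp(-c\min(t^2 n/k,\,tn))$ at level $t$. With $k=o(\sqrt n)$ we have $k\le t^2n/(k\log n)$ eventually, so the probabilities are summable in $n$, and Borel--Cantelli gives a.s. convergence of the noise term. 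Combining the three pieces yields $L(\widetilde g_n)\to L^*$ a.s.
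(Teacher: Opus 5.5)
Your proof is correct and follows essentially the paper's route: the triangle inequality separates the Laplace noise term $\sigma_Z\sum_j|\frac1n\sum_i\epsilon_{i,j}|$ from the non-private partitioning error, which the paper simply cites from Gy\"orfi and Weiss whereas you additionally split off its estimation part and handle it via McDiarmid. Your union bound over $2^k$ sign vectors plus Bernstein is the same $2^{k}$-factor Chernoff argument the paper carries out with the explicit Laplace moment generating function, followed by Borel--Cantelli. One small slip: $k=o(\sqrt n)$ does not imply $k^2\log n\le t^2 n$, but you do not need this, since $t^2n/k=(t^2/\delta_n)\sqrt n$ with $\delta_n=k/\sqrt n\to0$ already dominates both $k\log 2=\delta_n\sqrt n\log 2$ and $2\log n$.
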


Next, we bound the rate of convergence of the excess error probability for private data.

\begin{theorem}
\label{p_patt_rate}
Assume that  the distribution function $H_x(\cdot)$ is continuous for each $x$,
$m$ satisfies the generalized Lipschitz condition such that the function  $h(\cdot )^{1+\gamma}$ is concave and the margin condition \eqref{wtsyb} holds for some $\gamma > 0$.
Then,
\begin{align*}
\EXP\{L( \widetilde g_n)\}-L^*
&=
O\left(h(1/k)^{1+\gamma} \right)
+
O\left(\frac{k}{\sqrt{n} }\right).
\end{align*}
\end{theorem}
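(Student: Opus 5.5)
We follow the route of Theorem \ref{patt_rate}, replacing the sampling-error analysis by one that also accounts for the Laplace noise. Define the cell-wise quantities $\nu(A_{k,j})=\EXP[Y\IND_{\{X\in A_{k,j}\}}\mid \bX'_k]=\int_{A_{k,j}} m\,d\mu$ and the averaged regression function $\bar m_k(x)=\nu(A_{k,j})/\mu(A_{k,j})$ for $x\in A_{k,j}$. The standard plug-in identity gives
\begin{align*}
L(\widetilde g_n)-L^*=\int \IND_{\{\widetilde g_n(x)\ne g^*(x)\}}|m(x)|\,\mu(dx).
\end{align*}
On a cell $A_{k,j}$, a wrong decision at $x$ requires either $\mathrm{sign}\,\bar m_k(x)\neq \mathrm{sign}\, m(x)$ or $\mathrm{sign}\,\widetilde\nu_n(A_{k,j})\ne \mathrm{sign}\,\nu(A_{k,j})$. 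We therefore split the excess risk into a bias part and an estimation part.

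\textbf{Bias part.} $\bar m_k(x)$ and $m(x)$ can have different signs only if $|m(x)|\le |m(x)-\bar m_k(x)|$. This part is independent of the data $\D_n$ and of the noise, so it coincides with the bias term in the proof of Theorem \ref{patt_rate}. We reuse that bound: by the margin condition, $\int \IND_{\{|m|\le |m-\bar m_k|\}}|m|\,d\mu\le c\,\EXP\int|m-\bar m_k|^{1+\gamma}d\mu$. The generalized Lipschitz condition, combined with Jensen's inequality (using concavity of $h^{1+\gamma}$) and the fact that $\EXP\,\mu(S_{x,\rho(x,X'_{(1)})})$ for the nearest prototype is of order $1/k$ (continuity of $H_x$ makes $\mu(S_{x,\rho(x,X'_i)})$ uniform on $[0,1]$), yields $O(h(1/k)^{1+\gamma})$.

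\textbf{Estimation part.} Conditionally on $\bX'_k$, the remaining contribution is bounded by
\begin{align*}
\sum_{j=1}^k \int_{A_{k,j}}|m|\,d\mu\;\PROB\bigl(\mathrm{sign}\,\widetilde\nu_n(A_{k,j})\ne\mathrm{sign}\,\nu(A_{k,j})\mid \bX'_k\bigr).
\end{align*}
We bound this crudely, without exploiting the margin. On the event of a sign error we have $|\nu(A_{k,j})|\le|\widetilde\nu_n(A_{k,j})-\nu(A_{k,j})|$, and $\int_{A_{k,j}}|m|\,d\mu$ exceeds $|\nu(A_{k,j})|$ only on the bias set already handled. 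More precisely, one writes $\int_{A_{k,j}}|m|\,d\mu\le|\nu(A_{k,j})|+\int_{A_{k,j}}|m-\bar m_k|\,d\mu$. To avoid double counting, the second summand is absorbed into the bias analysis through the pointwise event $\{|m(x)|\le 2|m(x)-\bar m_k(x)|\}$, and the first summand is treated as follows. Each cell contributes at most
\begin{align*}
\EXP|\widetilde\nu_n(A_{k,j})-\nu(A_{k,j})|\le \sqrt{\Var(\widetilde\nu_n(A_{k,j}))}\le \sqrt{\frac{\mu(A_{k,j})}{n}}+\frac{\sigma_Z}{\sqrt n}.
\end{align*}
Summing over the $k$ cells and applying Cauchy--Schwarz gives $\sqrt{k/n}+k\sigma_Z/\sqrt n=O(k/\sqrt n)$, since $\sigma_Z=2\sqrt2/\alpha$ is a constant. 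This produces the second term of the theorem.

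\textbf{Main obstacle.} The delicate step is cleanly separating the two error sources at the level of cells. The decision is constant on each cell while $m$ is not, so the inequality $\int_{A_{k,j}}|m|\,d\mu\,\IND_{\{\text{sign error}\}}\le |\nu(A_{k,j})-\widetilde\nu_n(A_{k,j})|+(\text{bias})$ must be arranged carefully. We would follow exactly the decomposition used in the proof of Theorem \ref{patt_rate}, which produced $\sqrt{k/n}$. The only change is that the variance of $\widetilde\nu_n(A_{k,j})$ gains the additive term $\sigma_Z^2/n$ on every cell, regardless of the cell's mass. That uniform noise is what inflates $\sqrt{k/n}$ to $k/\sqrt n$.
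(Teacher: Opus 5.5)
Your estimation step is sound and essentially the paper's. Split at $\{|m|\le 2|m-\bar m_k|\}$. On the complement, $m$ and $\bar m_k$ have the same sign and $|m|<2|\bar m_k|$, so that region contributes at most $2\sum_{j}\EXP\{\IND_{\{\mathrm{sign}\,\widetilde\nu_n(A_{k,j})\ne\mathrm{sign}\,\nu(A_{k,j})\}}|\nu(A_{k,j})|\}\le 2\sum_j\EXP|\widetilde\nu_n(A_{k,j})-\nu(A_{k,j})|$, and your variance bound then gives $O(k/\sqrt n)$.

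The genuine gap is the bias part $\EXP\int|m|\,\IND_{\{0<|m|\le 2|m-\bar m_k|\}}\,d\mu$. It is \emph{not} the bias term of Theorem \ref{patt_rate}: that proof never compares $m$ with the cell average $\bar m_k$, only with $m(X'_{k,1}(x))$ via $g_{\mathrm{NN}}$. Moreover, your inequality $\int\IND_{\{|m|\le|m-\bar m_k|\}}|m|\,d\mu\le c\int|m-\bar m_k|^{1+\gamma}d\mu$ does not follow from the margin condition. That condition controls $\mu\{0<|m|\le t\}$ only for thresholds $t$ that do not depend on $x$. For example, a cell of mass $\delta^\gamma$ on whose two halves $m=\pm\delta$ (so $\bar m_k=0$ there) contributes $\delta^{1+\gamma}$ to the left side but only $c\,\delta^{1+2\gamma}$ to the right.

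The convexity trick with $g_t(v)=(2v-t)_+$ works in the paper only because $|m(x)-m(X'_{k,1}(x))|\le h(V_x)$ with $V_x\eD\min_{i\le k}U_i$ for \emph{every} $x$. After Fubini, the margin condition is therefore applied to $\int g_{|m(x)|}(v)\,\mu(dx)$ with a single $x$-free $v$. For $\bar m_k(x)$ you would need control of $h(\mu(S_{x,\rho(x,z)}))$ for all $z$ in the Voronoi cell of $x$. In a general metric space nothing in the hypotheses makes Voronoi cells small in this sense; the fact $\EXP\,\mu(S_{x,\rho(x,X'_{k,1}(x))})=1/(k+1)$ concerns the nucleus only. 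The natural repair is $\int|m-\bar m_k|^{1+\gamma}d\mu\le 2^{1+\gamma}\int|m(x)-m(X'_{k,1}(x))|^{1+\gamma}\mu(dx)$ plus an $L^{1+\gamma}$ version of the margin argument. It only yields $O(h(1/k)^{(1+\gamma)^2/(1+2\gamma)})$, which is weaker than the claimed rate.

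The missing idea is to avoid $m-\bar m_k$ altogether. Both $\bar g_k=\mathrm{sign}\,\bar m_k$ and $g_{\mathrm{NN}}$ are functions of the coarsened observation $(Q_k(X),\bX'_k)$, and $\bar g_k$ is the Bayes rule for it, so $\EXP\{L(\bar g_k)\}\le\EXP\{L(g_{\mathrm{NN}})\}$. Hence
$\EXP\{L(\widetilde g_n)\}-L^*\le[\EXP\{L(g_{\mathrm{NN}})\}-L^*]+[\EXP\{L(\widetilde g_n)\}-\EXP\{L(\bar g_k)\}]$.
The first bracket is $O(h(1/k)^{1+\gamma})$ by the nearest-prototype argument. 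Applying \eqref{g} to the quantized problem turns the second bracket exactly into $\sum_j\EXP\{\IND_{\{\mathrm{sign}\,\widetilde\nu_n(A_{k,j})\ne\mathrm{sign}\,\nu(A_{k,j})\}}|\nu(A_{k,j})|\}$. The weights are $|\nu(A_{k,j})|$ rather than $\int_{A_{k,j}}|m|\,d\mu$, so there is no cell-wise bias to absorb, and your variance computation finishes the proof.
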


\noindent
{\sc Remark 3.}
The second term of the bound in this theorem corresponds to the estimation error such that the factor is proportional to $\sigma_Z$:
\begin{align*}
O\left(\frac{\sigma_Z k}{\sqrt{n} }\right)
&=
O\left(\frac{k}{\sqrt{n\alpha^2} }\right),
\end{align*}
where $n\alpha^2$ is usually interpreted as the effective sample size.\\

\noindent
{\sc Remark 4.}
If in the special case of $\X=\R^d$ the condition (\ref{hs}) holds, then
the bound in Theorem  \ref{p_patt_rate} has the form
\begin{align*}
O\left(1/k^{(1+\gamma)/d} \right)
+
O\left( \frac{k}{\sqrt{n}}\right).
\end{align*}
Set
\begin{align*}
k\approx n^{\frac{d}{2(d+1+\gamma)} }.
\end{align*}
Then, Theorem  \ref{p_patt_rate} implies
\begin{align*}
\EXP\{L(g_n)\}-L^*
&=
O\left(n^{-\frac{1+\gamma}{2(d+1+\gamma)}}\right).
\end{align*}

\section{Proofs}\label{s:proofs}

\subsection{Proof of Theorem \ref{patt_rate}}

\begin{proof}
If $x\in A_{k,j}$, then let
\begin{align*}
m_{n}(x) 
\doteq \frac{\nu_{n}(A_{k,j})}{\mu(A_{k,j})},
\end{align*}
for which we have $g_n(x) = \mbox{sign } m_{n}(x)$.
Set $\nu(A_{k,j}) = \int_{A_{k,j}}m(z)\mu(dz)$ and
\begin{align*}
\overline{m}_{k}(x)
&=
\EXP\{m_{n}(x)\mid \bX'_k\}
=
\frac{\nu(A_{k,j})}{\mu(A_{k,j})}
\end{align*}
Let us introduce the two auxiliary decision rules
\begin{align*}
\overline{g}_{k}(x)
&=
\mbox{sign } \overline{m}_{k}(x)
\end{align*}
and
\begin{align*}
g_{\mathrm{NN}}(x)
&=
\mbox{sign } m(X'_{k,1}(x)),
\end{align*}
where $X'_{k,1}(x)$ stands for the 1-NN of $x$ from $X'_{1},\dots ,X'_{k}$.
We decompose the excess error probability as
\begin{align*}
\EXP\{L(g_n)\}-L^*
&=
I_n+J_n,
\end{align*}
with the approximation error
\begin{align*}
I_n
&=
\EXP\{L(g_{\mathrm{NN}})\}-L^*
\end{align*}
and the estimation error
\begin{align*}
J_n
&=
\EXP\{L(g_n)\}-\EXP\{L(g_{\mathrm{NN}})\}.
\end{align*}
It is well-known that for any decision rule $g$ it holds that
\begin{align}
L(g)-L^*
&=\int\IND_{\{ g( x)\ne g^*(x)\}}|m(x)|\mu(dx),
\label{g}
\end{align}
cf.~Theorem~2.2 in Devroye, Gy\"orfi and Lugosi \cite{DeGyLu96}.
For the {\it approximation error}, taking $g=g_{\mathrm{NN}}$, \eqref{g} implies
\begin{align*}
I_{n} 
&= \int  \EXP\left\{ 
\IND_{\{ \mbox{sign } m(X'_{k,1}(x))\ne \mbox{sign } m(x)\}} \right\}|m(x)|\mu(dx)\\
&\le \int  \EXP\left\{|m(X'_{k,1}(x)) -m(x)| 
\IND_{\{0<|m(x)| \le |m(X'_{k,1}(x)) -m(x)|\}} \right\}\mu(dx).
\end{align*}
For $t>0$, define the convex function $g_t$ on $\R_+$ by
\begin{align*}
g_t(v)=(2v-t)_+.
\end{align*}
Since
\begin{align*}
v\IND_{\{t\le v\}}
&\le
g_t(v), \quad v\ge 0,
\end{align*}
one has
\begin{align}
I_{n}
&\le 
\int_{\{0<|m(x)| \}}  \EXP\left\{g_{|m(x)| }\left(|m(X'_{k,1}(x)) -m(x)| \right) \right\}\mu(dx).
\label{20'}
\end{align}
By the generalized Lipschitz condition,
\begin{align*}
|m(x) -m(X'_{k,1}(x))|
&\le 
h(\mu(S_{x,\rho(x,X'_{k,1}(x))})).
\end{align*}
The distribution function $H_x$ is continuous for any $x \in \X$,
therefore, as in  Biau and Devroye \cite{BiDe15}
and in  Gy\"orfi and Weiss \cite{GyWe21},
\begin{align}
\label{mD}
\mu ( S_{x,\rho(x,X'_{k,1}(x))})
\eD \min_{1\le i\le k} U_i
\end{align}
with $U_1,\dots ,U_k$ i.i.d.\ uniform on $[0,1]$.
Thus, for all $x \in \X$,
\begin{align}
\EXP\left\{g_{|m(x)|}\left(\left|m(x) -m(X'_{k,1}(x))
\right| \right)\right\}
&\le
\EXP\left\{g_{|m(x)|}\left(h(\mu(S_{x,\rho(x,X'_{k,1}(x))})) \right)\right\}\nonumber\\
&=
\EXP\left\{g_{|m(x)|}\left( h\left(\min_{1\le i \le k}U_i\right)\right)\right\}.
\label{20'''}
\end{align}
Now, (\ref{20'}) and (\ref{20'''}) yield
\begin{align}
I_{n} 
&\le
 \EXP\left\{\int_{\{0< |m(x)|\}}   
g_{|m(x)| }\left( h\left(\min_{1\le i \le k}U_i\right)  \right)\mu(dx)\right\}.
\label{20*}
\end{align}
The margin condition yields
\begin{align*}
\PROB\{0<|m(X)|\le t\}
\le 
G^*(t)
\le \min\{c^*t^{\gamma},1\},
\end{align*}
when $ 0 < t $.
For $v\in\R_+$, by partial integration one obtains
\begin{align}
\int_{\{0<|m(x)| \}}  g_{|m(x)| }\left(v\right)\mu(dx)
&=
\int_0^\infty g_t(v)dG^*(t)\nonumber\\
&= 
\int_0^{2v} G^*(t)dt\nonumber\\
&\le 
c^* \int_0^{2v} t^{\gamma}dt\nonumber \\
&=
\frac{c^* 2^{1+\gamma}}{1+\gamma } v^{1+\gamma}.
\label{gv}
\end{align}
This together with (\ref{20*}) yields
\begin{align}
I_{n} 
&\le
\frac{c^* 2^{1+\gamma}}{1+\gamma }
\EXP\left\{\left( h\left(\min_{1\le i \le k}U_i\right)  \right)^{1+\gamma}\right\}
\le
c_1h(1/k)^{1+\gamma},
\label{pt}
\end{align}
where in the last step we refer to Jensen's inequality and the condition that the function $h^{1+\gamma}$ is concave. 

Now we consider the {\it estimation error} $J_n$.
We introduce the quantizer
\begin{align*}
Q_{k}(x)
&=
A_{k,j}
\end{align*}
when $x\in A_{k,j}$.
Note that both decision rules $\overline{g}_{k}$ and $g_{\mathrm{NN}}$ are defined in terms of the data $Q_k(X)$ and $\bX'_k$. Based on these coarser data,
$\overline{g}_{k}$ is the Bayes decision rule,
\begin{align}
\label{oB}
\overline{g}_{k}(x)
&=
\mbox{sign } \overline{m}_{k}(x).
\end{align}
For this Bayes decision rule, we now apply \eqref{g} with $X$ being replaced with $Q_k(X)$.
Then, \eqref{g} and \eqref{oB} imply that
\begin{align*}
J_n
&\le 
\EXP\{L(g_n)\}-\EXP\{L(\overline{g}_{k})\}\\
&=\int  \EXP\left\{ 
\IND_{\{ \mbox{sign } m_n(x)\ne \mbox{sign } \overline{m}_{k}(x)\}} \right\} |\overline{m}_{k}(x)|\mu(dx)\\
&=
\sum_{j=1}^k \int_{A_{k,j}}  \EXP\left\{ 
\IND_{\{ \mbox{sign } m_n(x)\ne \mbox{sign } \overline{m}_{k}(x)\}} \right\} |\overline{m}_{k}(x)|\mu(dx)\\
&=
\sum_{j=1}^k\EXP\left\{ 
\IND_{\{ \mbox{sign } \nu_n(A_{k,j})\ne \mbox{sign } \nu(A_{k,j})\}} |\nu(A_{k,j} )| \right\}\\
&\le 
\sum_{j=1}^k\EXP\left\{ 
\IND_{\{ |\nu_n(A_{k,j}) - \nu(A_{k,j})|\ge |\nu(A_{k,j})|\}} |\nu(A_{k,j} )| \right\}.
\end{align*}
We finish the proof by showing that
\begin{align}
\label{wt}
\sum_{j=1}^k\EXP\left\{ 
\IND_{\{ |\nu_n(A_{k,j}) - \nu(A_{k,j})|\ge |\nu(A_{k,j})|\}}  |\nu(A_{k,j} )|\right\}
&=
O\left( \sqrt{\frac{k}{n}} \right).
\end{align}
First, Chebyshev's inequality yields
\begin{align*}
\PROB\{ | \nu_n(A_{k,j})- \nu(A_{k,j})|\ge |\nu(A_{k,j})|\mid \bX'_k\}
&\le 
\min\left\{ 1,\frac{\mu(A_{k,j})}{n\nu(A_{k,j})^2}\right\}\\
&\le
\frac{1}{\sqrt{n}}\frac{\sqrt{\mu(A_{k,j})}}{|\nu(A_{k,j})|}.
\end{align*}
Second, applying Jensen's inequality implies
\begin{align*}
&\sum_{j=1}^k\EXP\left\{ 
\IND_{\{ |\nu_n(A_{k,j}) - \nu(A_{k,j})|\ge |\nu(A_{k,j})|\}}  |\nu(A_{k,j} )|\right\}\\
&\le 
\frac{1}{\sqrt{n}} \sum_{j=1}^k\EXP\left\{ 
 \frac{\sqrt{\mu(A_{k,j})}}{|\nu(A_{k,j})|}|\nu(A_{k,j} )|
\right\}\\
&\le 
\frac{1}{\sqrt{n}}\sum_{j=1}^k\EXP\left\{ \sqrt{\mu(A_{k,j})}\right\}\\
&\le 
\frac{1}{\sqrt{n}}\sum_{j=1}^k \sqrt{\EXP\left\{\mu(A_{k,j})\right\}}\\
&=
\frac{1}{\sqrt{n}}\sum_{j=1}^k \sqrt{1/k}\\
&=
\sqrt{\frac{k}{n}}
\end{align*}
which proves \eqref{wt}.
\end{proof}

\subsection{Proof of Theorem \ref{thm:multi_label_class,priv}}

\begin{proof}
Let
\begin{align*}
\widetilde{m}_{n}(x) 
\doteq \widetilde{m}_{n}(X'_j)
\doteq \frac{\widetilde{\nu}_{n}(A_{k,j})}{\mu(A_{k,j})} \quad \text{if} \;x \in A_{k,j},
\end{align*}
for which we have $\widetilde g_n(x) = \mbox{sign } \widetilde m_{n}(x)$.
Because of (\ref{g}),
\begin{align*}
L(\widetilde g_n) - L^* \leq  \int |\widetilde m_{n}(x) - m(x)| \marg(dx).
\end{align*}
Since
\begin{align*}
\widetilde{m}_{n}(x) \doteq m_{n} (x) + \frac{\sigma_Z \sumi \varepsilon_{i,j}}{n \mu(A_{k,j})} \;\text{ for } x \in A_{k,j},
\end{align*}
by the triangle inequality  we have
\begin{align*}
&\int |\widetilde{m}_{n}(x) - m(x) |\marg(dx)\\
&\leq \int |m_{n}(x) - m(x) |\marg(dx) 
+ \int  |\widetilde{m}_{n}(x) - m_{n}(x) |\marg(dx)\\
&= \int |m_{n}(x) - m(x) |\marg(dx) + 
\sigma_Z\sum_{j=1}^k\bigg| \frac 1n\sum_{i=1}^n\varepsilon_{i,j}\bigg|.
\end{align*}

As in the proof of Theorem 3 in  Gy\"orfi and Weiss \cite{GyWe21}, the first term tends to zero a.s., if $k_n \to \infty$ and $k_n/n\to 0$.
For the second term, under the condition $k_n/\sqrt{n}\to 0$ we prove
\begin{align}
\label{Le} 
\widetilde J_n=\sum_{j=1}^{k_n}  \bigg|\frac{1}{n}\sum_{i=1}^n\varepsilon_{i,j}\bigg| \to 0
\end{align}
a.s.
Similarly to Lemma 1 in  Berrett,  Gy\"orfi and  Walk \cite{BeGyWa21}, for $0<\epsilon\le 1$ and $t=\epsilon n/k_n$, we follow the Chernoff bounding scheme:
\begin{align*}
\PROB\{\widetilde J_n>\epsilon\}
&\le
\frac{\EXP\{ e^{t\widetilde J_n}\}}{e^{t\epsilon}}
=
\frac{\EXP\left\{ e^{ \frac{t}{n}\left|\sum_{i=1}^n\varepsilon_{i,1}\right|  }\right\}}{e^{t\epsilon}}^{k_n}.
\end{align*}
Because of
\begin{align*}
&\EXP\left\{ e^{ \frac{t}{n}\left|\sum_{i=1}^n\varepsilon_{i,1}\right| }\right\}\\
&=
\EXP\left\{ e^{\frac{t}{n}  \sum_{i=1}^n\varepsilon_{i,1} }
\IND_{\{ \sum_{i=1}^n\varepsilon_{i,1}\ge 0\}}\right\}
+
\EXP\left\{ e^{-\frac{t}{n} \sum_{i=1}^n\varepsilon_{i,1} }
\IND_{\{ \sum_{i=1}^n\varepsilon_{i,1}< 0\}}\right\}\\
&\le 
2\EXP\left\{ e^{\frac{t}{n}  \sum_{i=1}^n\varepsilon_{i,1} }\right\}\\
&=
2\EXP\left\{ e^{\frac{t}{n}  \varepsilon_{1,1} }\right\}^n\\
&=
\frac{2}{\left(1-\frac{t^2}{2n^2} \right)^n},
\end{align*}
where the last step holds if $t/n<1\sqrt 2$.
Using the fact that $\log(1-x)\ge -1.4x$ for $0\le x\le 1/8$,
\begin{align*}
\PROB\{\widetilde J_n>\epsilon\}
&\le
\frac{2^{k_n}}{e^{t\epsilon}\left(1-\frac{t^2}{2n^2} \right)^{nk_n}}\\
&=
\frac{2^{k_n}}{e^{\epsilon^2n/k_n}\left(1-\frac{\epsilon^2}{2k_n^2} \right)^{nk_n}}\\
&\le
\frac{2^{k_n}}{e^{\epsilon^2n/k_n-\frac{1.4\epsilon^2n}{2k_n}}}\\
&=
\frac{2^{k_n}}{e^{0.3\epsilon^2n/{k_n}}},
\end{align*}
when $k_n \ge 2$.
Putting $\delta_n=k_n/\sqrt{n}$, $\delta_n\to 0$ holds by assumption and so
\begin{align*}
(\epsilon^2 n/k_n-k_n)/\log n = \sqrt{n}(\epsilon^2/\delta_n-\delta_n)/\log n\to \infty.
\end{align*}
Therefore
\begin{align*}
\sum_n \PROB\{\widetilde J_n>\epsilon\}
&\le 
\sum_n e^{-(0.3\epsilon^2n/k_n-k_n\log 2)}
<
\infty.
\end{align*}
Thus, \eqref{Le} follows from the Borel-Cantelli lemma.
\end{proof}

\subsection{Proof of Theorem \ref{p_patt_rate}}

\begin{proof}
Similarly to the proof of Theorem \ref{patt_rate}, we decompose the excess error probability as
\begin{align*}
\EXP\{L(\widetilde g_n)\}-L^*
&=
I_n+J_n,
\end{align*}
with the approximation error
\begin{align*}
I_n
&=
\EXP\{L(g_{\mathrm{NN}})\}-L^*
\end{align*}
and with estimation error
\begin{align*}
J_n
&=
\EXP\{L(\widetilde g_n)\}-\EXP\{L(g_{\mathrm{NN}})\}.
\end{align*}

The approximation error is the same as in the proof of Theorem \ref{patt_rate}, therefore from (\ref{pt}) one gets
\begin{align*}
I_{n} 
&\le
c_1h(1/k)^{1+\gamma}.
\end{align*}

As to the estimation error, the proof of Theorem \ref{patt_rate} yields that
\begin{align*}
J_n
&\le 
\EXP\{L(\widetilde g_n)\}-\EXP\{L(\overline{g}_{k})\}\\
&\le 
\sum_{j=1}^k\EXP\left\{ 
\IND_{\{ |\widetilde \nu_n(A_{k,j}) - \nu(A_{k,j})|\ge |\nu(A_{k,j})|\}} |\nu(A_{k,j} )| \right\}.
\end{align*}
Chebyshev's inequality yields
\begin{align*}
\PROB\{ |\widetilde \nu_n(A_{k,j})- \nu(A_{k,j})|\ge |\nu(A_{k,j})|\mid \bX'_k\}
&\le 
\min\left\{ 1,\frac{\Var (\widetilde \nu_n(A_{k,j})\mid \bX'_k )}{\nu(A_{k,j})^2}\right\}\\
&=
\min\left\{ 1,\frac{\Var (Z_{1,j}\mid \bX'_k )/n}{\nu(A_{k,j})^2}\right\}\\
&\le
\frac{1}{\sqrt{n}}\frac{\sqrt{\Var (Z_{1,j}\mid \bX'_k ) }}{|\nu(A_{k,j})|}\\
&\le
\frac{1}{\sqrt{n}}\frac{\sqrt{\sigma_Z^2 + \mu(A_{k,j}) }}{|\nu(A_{k,j})|}.
\end{align*}
Therefore, Jensen's inequality yields that
\begin{align*}
&
\sum_{j=1}^k\EXP\left\{ 
\IND_{\{ |\widetilde \nu_n(A_{k,j}) - \nu(A_{k,j})|\ge |\nu(A_{k,j})|\}} |\nu(A_{k,j} )| \right\}\\
&\le 
\frac{1}{\sqrt{n}}\sum_{j=1}^k\EXP\left\{ \frac{\sqrt{\sigma_Z^2 + \mu(A_{k,j}) }}{|\nu(A_{k,j})|}
 |\nu(A_{k,j} )| \right\}\\
&\le 
\frac{1}{\sqrt{n}}\sum_{j=1}^k \sqrt{\sigma_Z^2 + \EXP\left\{\mu(A_{k,j})\right\} }\\
&=
\frac{1}{\sqrt{n}}\sum_{j=1}^k \sqrt{\sigma_Z^2 + 1/k }\\
&=
\frac{k \sqrt{\sigma_Z^2 + 1/k } }{\sqrt{n}}\\
&\approx 
\frac{k \sigma_Z }{\sqrt{n}}.
\end{align*}

\end{proof}

\end{document}